\theoremstyle{plain}
\newtheorem{theorem}{Theorem}
\newtheorem{proposition}{Proposition}
\newtheorem{lemma}{Lemma}
\theoremstyle{definition}
\newtheorem{remark}{Remark}
\newtheorem{question}{Question}
\DeclareMathOperator{\red}{red}
\def\PP{\mathbb{P}^}
\def\sr{\mathrm{r}}
\def\sbr{\mathrm{\underline{r}}}
\newcommand{\C}{\mathcal{C}}
\newcommand{\T}{\mathcal{T}}
\newcommand{\Y}{\mathcal{Y}}
\newcommand{\D}{\mathcal{D}}
\newcommand{\LL}{\mathcal{L}}
\newcommand{\R}{\mathcal{R}}
\date{}
\begin{document}

\baselineskip=17pt

\title{Curvilinear schemes and maximum rank of forms}

\author[E. Ballico]{Edoardo Ballico}
\address{Dept. of Mathematics\\
  University of Trento\\
I-38123 Povo (TN), Italy}
\email{edoardo.ballico@unitn.it}

\
\author[A. Bernardi]{Alessandra Bernardi}
\address{Dept. of Mathematics, University of Bologna\\
I-40126 Bologna, Italy
}
\email{alessandra.bernardi5@unibo.it}

\date{}

\thanks{The authors were partially supported by CIRM of FBK Trento 
(Italy), Mathematical Department of Trento (Italy), Project Galaad of INRIA Sophia Antipolis M\'editerran\'ee 
(France),  Marie Curie: Promoting science (FP7-PEOPLE-2009-IEF), MIUR and GNSAGA of 
INdAM (Italy), Mathematical Department of Turin ``Giuseppe Peano'' (Italy).}

\begin{abstract} We define the \emph{curvilinear rank} of a degree $d$ form $P$ in $n+1$ variables as the minimum length  of a curvilinear scheme, contained in the $d$-th Veronese embedding of $\mathbb{P}^n$, whose span contains the projective class of $P$. Then, we give a bound for rank of any homogenous polynomial, in dependance on its curvilinear rank.
\end{abstract}

\subjclass[2010]{14N05}
\keywords{Maximum rank, curvilinear rank, curvilinear schemes, cactus rank.}

\maketitle

\section*{Introduction}

The \emph{rank}  $\sr(P)$ of a homogeneous polynomial $P\in \mathbb{C}[x_0, \ldots , x_n]$ of degree $d$, is the  minimum $r\in \mathbb{N}$ such that $P$  can be written as sum of $r$ pure powers of linear forms $L_1, \ldots, L_r \in \mathbb{C}[x_0, \ldots , x_n]$: 
\begin{equation}\label{FT}
P=L_1^d+ \cdots + L_r^d.
\end{equation} 

A very interesting open question is to determine the maximum possible value that the rank of a form (i.e. a homogeneous polynomial) of given degree in a certain number of variables can have.
\\
On our knowledge, the best general achievement on this problem is due to J.M. Landsberg and Z. Teitler that in \cite[Proposition 5.1]{lt} proved that  the rank of a degree $d$ form in $n+1$ variables  is smaller than or equal to
$
{n+d \choose d}-n
$
.
Unfortunately this bound is sharp only for $n=1$ if $d\geq 2$; in fact, for example, if $n=2$ and $d=3,4$, then the maximum ranks are 5 and 7 respectively (see \cite[Theorem 40 and 44]{bgi}).
More recently G. Blekherman and Z. Teitler proved in \cite{bt} that the maximum rank is always smaller than or equal to twice the generic rank that is the rank of a generic polynomial, i.e. the minimum $r$ s.t. the $r$-th secant variety to Veronese variety fills up the ambient space (such a secant variety is classicaly defined to be the Zariski closure of the set of all $r$-th secant space to a Veronesean). In the celebrated Alexander and Hischowitz paper \cite{ah} they computed the dimensions of all such secant varieties, so the generic rank is nowadays considered a classical result. Cleary finding a bound for the rank of any polynomial given the number of variables and the degree is a very different and difficult problem.

Few more results were obtained by focusing the attention on limits of forms of given rank. When a form $P$ is in the Zariski closure of the set of forms of rank $s$, it is said that $P$ has \emph{border rank} $\sbr(P)$ equal to $s$. For example,  the maximum rank of forms of border ranks 2, 3 and 4 are known (see \cite[Theorems 32 and 37]{bgi} and \cite[Theorem 1]{bb2}).
In this context, in \cite{bb1} we posed the following:

\begin{question}[\cite{bb1}]\label{q1}
Is it true that $\sr(P) \le d(\sbr(P)-1)$ for all degree $d$ forms $P$? Moreover, does the  equality hold if and only if the projective class of $P$ belongs to the tangential variety of a Veronese variety?
\end{question}

The Veronese variety $X_{n,d}\subset \mathbb{P}^{N_{n,d}}$, with $n \ge 1$,  $d \ge 2$ and 
$N_{n,d}:= {{n+d}\choose d}-1$ is the classical  $d$-uple Veronese embedding $\nu_d: \PP n \to \PP 
{N_{n,d}}$ and parameterizes projective classes of degree $d$ pure powers of linear forms in $n+1$ variables. Therefore the rank $\sr(P)$ of  $[P]\in \mathbb{P}^{N_{n,d}}$ is the minimum $r$ for which there exists a  length $r$ smooth zero-dimensional scheme $Z\subset X_{n,d}$ whose span contains $[P]$ (with an abuse of notation we are extending the definition of rank of a form $P$ given in (\ref{FT}) to its projective class $[P]$). More recently, other notions of polynomial rank have been introduced and widely discussed (\cite{bubu}, \cite{rs}, \cite{br}, \cite{bbm},   \cite{bb3}). They are all related to the minimal length of a certain zero-dimensional schemes  embedded in $X_{n,d}$ whose span contains the given form. Here we recall only the notion of \emph{smoothable rank} $\mathrm{smr}(P)$ of a form $P$ with $[P]\in \mathbb{P}^{N_{n,d}}$ (see \cite{bbm,bubu}):
$$\mathrm{smr}(P)=\min \left\{\deg (Z) \; | \;  Z \hbox{ limit of smooth schemes } Z_i, \deg(Z_i)=\deg(Z), \right.$$
$$\left. Z,Z_i\subset X_{n,d}, \, \dim_KZ=\dim_K Z_i =0 \hbox{ and } [P]\in \langle Z \rangle\right\}.$$
With this definition, it seems more reasonable to state Question \ref{q1} as follows:
\begin{question}\label{q2}
Fix $[P]\in \mathbb {P}^{N_{n,d}}$. Is it true that  $\sr (P) \le (\mathrm{smr}(P)-1)d$~?
\end{question}

In this paper we want to deal with a more restrictive but easier to handle  notion of rank, namely the ``curvilinear rank". 
We say that a scheme $Z\subset \mathbb{P}^N$ is \emph{curvilinear} if it is a finite union of schemes of the form $\mathcal{O}_{\C_i,P_{i}}/{\mathfrak{m}}_{P_i}^{e_i}$ for smooth points $P_i$ on reduced curves $\C_i\subset \mathbb{P}^N$, or equivalently that the tangent space at each connected component of $Z$ supported at the $P_i$'s has Zariski dimension $\leq 1$. We define the \emph{curvilinear rank} $\mathrm{Cr}(P)$ of a degree $d$ form $P$ in $n+1$ variables as:
$$\mathrm{Cr}(P):=\min\left\{\deg(Z)\; | \; Z\subset X_{n,d}, \; Z \hbox{ curvilinear, } [P]\in \langle Z \rangle\right\}.$$
The main result of this paper is the following:

\begin{theorem}\label{i1}
For any degree $d$ form $P$ we have that
$$\sr (P) \le (\mathrm{Cr}({P})-1)d + 2-\mathrm{Cr}({P}) .$$
\end{theorem}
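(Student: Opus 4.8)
The plan is to first rewrite the target bound in a more transparent form: for $c=\mathrm{Cr}(P)$ one has $(c-1)d+2-c=(c-1)(d-1)+1$, so the claim is $\sr(P)\le(\mathrm{Cr}(P)-1)(d-1)+1$. I would fix a curvilinear scheme $Z\subset X_{n,d}$ realizing the curvilinear rank, i.e. $\deg(Z)=c$ and $[P]\in\langle Z\rangle$, and decompose $Z=Z_1\sqcup\cdots\sqcup Z_s$ into connected components, each $Z_i$ curvilinear of length $e_i$ supported at a single point, with $\sum_i e_i=c$. Since $\langle Z\rangle=\langle Z_1\rangle+\cdots+\langle Z_s\rangle$, I can split $P=F_1+\cdots+F_s$ with $[F_i]\in\langle Z_i\rangle$, and by subadditivity of rank it suffices to bound each $\sr(F_i)$ separately. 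Thus everything reduces to a single connected curvilinear scheme.

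The heart of the matter is then the single-point statement: if $W$ is curvilinear of length $\ell$ supported at one point of $X_{n,d}$ and $[Q]\in\langle\nu_d(W)\rangle\setminus\{0\}$, then $\sr(Q)\le(\ell-1)(d-1)+1$. To prove it I would pass to a rational normal curve. A length-$\ell$ curvilinear scheme is exactly the $(\ell-1)$-jet of a smooth curve at a point $p$; hence $W$, viewed in $\mathbb{P}^{n}$ through $\nu_d^{-1}$, lies on a rational normal curve $\Gamma$ of degree $\ell-1$ inside a $\mathbb{P}^{\ell-1}\subseteq\mathbb{P}^{n}$. Because multiplication maps on $\mathbb{P}^{1}$ are surjective, $\nu_d(\Gamma)$ is a rational normal curve $C$ of degree $D:=d(\ell-1)$ spanning a $\mathbb{P}^{D}$, and $\nu_d(W)\subset C$ is the length-$\ell$ sub-jet of $C$ at $\nu_d(p)$, whose span is the $(\ell-1)$-st osculating space of $C$.

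Now I would invoke Sylvester's theorem on binary forms, in the Comas--Seiguer / Landsberg--Teitler formulation: a point of $\mathbb{P}^{D}$ of border rank exactly $b$ with respect to a degree-$D$ rational normal curve $C$ has $C$-rank equal to $b$ or to $D-b+2$. Since the points of $C=\nu_d(\Gamma)$ are $d$-th powers of linear forms, a $C$-decomposition of $[Q]$ is a genuine decomposition of $Q$, so $\sr(Q)$ is at most the $C$-rank. The delicate point is to feed Sylvester a curve of the \emph{right} degree. If $[Q]\in X_{n,d}$ then $\sr(Q)=1$ and we are done; otherwise let $b$, with $2\le b\le\ell$, be the osculating order of $[Q]$, i.e. the least integer with $[Q]\in\langle\nu_d(W_b)\rangle$ for the length-$b$ sub-jet $W_b\subseteq W$. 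Replacing $\Gamma$ by a rational normal curve of degree $b-1$ through $W_b$, so that its $\nu_d$-image is a rational normal curve of degree $D'=d(b-1)$ on which $[Q]$ has border rank exactly $b$, Sylvester gives $\sr(Q)\le D'-b+2=(b-1)(d-1)+1\le(\ell-1)(d-1)+1$; here one uses $b\le D'-b+2$, i.e. $2(b-1)\le d(b-1)$, which holds since $d\ge2$, to ensure $D'-b+2$ is the relevant branch of the dichotomy.

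Applying this with $W=Z_i$, $\ell=e_i$, $Q=F_i$ yields $\sr(F_i)\le(e_i-1)(d-1)+1$, hence $\sr(P)\le\sum_i[(e_i-1)(d-1)+1]=(d-1)c-(d-2)s$; since $s\ge1$ and $d\ge2$ this is at most $(d-1)c-(d-2)=(c-1)(d-1)+1$, which is the assertion. I expect the main obstacle to be precisely the bookkeeping of Paragraph 3: each jet must be placed on a rational normal curve whose degree is matched to the actual osculating order $b$ of the corresponding summand, since a curve of too high degree would make Sylvester's estimate $D-b+2$ strictly exceed the target. Getting this matching correct, and verifying the inequality $b\le D-b+2$ that selects the correct branch of Sylvester's theorem, is where the argument must be carried out carefully; the passage from the curvilinear jet to a rational normal curve (and the identification of osculating spaces with spans of sub-jets) is the other point requiring attention.
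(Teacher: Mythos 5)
Your reduction to a single connected component and the final bookkeeping are correct, but the step on which the whole argument rests is false: a connected curvilinear scheme $W$ of length $\ell$ in $\mathbb{P}^n$ need \emph{not} lie on a rational normal curve of degree $\ell-1$, nor indeed on a rational normal curve of any degree. First, when $\ell-1>n$ there is no $\mathbb{P}^{\ell-1}$ inside $\mathbb{P}^n$ at all, and this case genuinely occurs (Section \ref{sec2} of the paper is about ternary forms with $\mathrm{Cr}(P)\ge 4$). Second, even when $\ell-1\le n$ the jet can be linearly degenerate: take $W\subset\mathbb{P}^3$ to be the length-$4$ jet at the origin of the smooth affine curve $\{y=x^3,\ z=0\}$, i.e.\ the curvilinear scheme with ideal $(y-x^3,\,x^4,\,z)$. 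It spans only the plane $\{z=0\}$; a rational normal curve of degree $m\ge 3$ meets any plane in a scheme of length at most $3$, and a direct substitution of the parametrization $(t,t^3,0)$ shows that no line and no smooth conic contains $W$. Hence $W$ lies on no rational normal curve whatsoever, so your construction of $\Gamma$ fails, and the same problem recurs when you later replace $\Gamma$ by a rational normal curve of degree $b-1$ through the sub-jet $W_b$. What is true --- and is exactly Lemma \ref{a3} of the paper --- is that $W$ lies on an \emph{integral rational} curve of degree $\le \ell-1$, obtained by projecting a rational normal curve (in the example above, the cuspidal cubic $yz^2=x^3$). But the Veronese image of such a curve is not a rational normal curve, so Sylvester/Comas--Seiguer no longer applies directly; this is why the paper needs a second lemma (Lemma \ref{cc1}) extending the Sylvester-type bound to arbitrary integral rational curves, proved by lifting the point and the scheme back to a rational normal curve through a linear projection. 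Your proposal is missing precisely these two ingredients, and they are the real content of the proof.

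A secondary issue: even where your curve exists, the claim that $[Q]$ has border rank \emph{exactly} $b$ with respect to the degree-$d(b-1)$ curve is asserted rather than proved; one must rule out a divisor of degree less than $b$, not supported at $p$, whose span contains $[Q]$ (this can be done by a Grassmann-formula argument using $b+(b-1)\le d(b-1)+2$, but without it Sylvester's dichotomy only yields the useless bound $D'-b'+2$ with $b'<b$). Apart from these gaps, your component-by-component decomposition is a genuine structural difference from the paper, which never splits $Z$: its Lemma \ref{a3} places the whole, possibly disconnected, curvilinear scheme on a single rational curve.
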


Theorem \ref{i1} is sharp if $\mathrm{Cr}(P) =2, 3$ (\cite[Theorem 32 and 37]{bgi}).

Clearly if a scheme is curvilinear is also smoothable, so the next question will be to understand if Theorem \ref{i1} holds even though we substitute the curvilinear rank with the smoothable rank:

\begin{question}\label{i2}
Fix $[P]\in \mathbb {P}^{N_{n,d}}$. Is it true that $\sr (P) \le (\mathrm{smr}(P)-1)d +2 -\mathrm{smr}(P)$~?
\end{question}

This manuscript is organized as follows: Section \ref{sec1} is entirely devoted to the proof of Theorem \ref{i1} with a lemma; in Section \ref{sec2} we study the case of ternary forms and we prove that, in such a case, Question \ref{q2} has an affirmative answer. 

We will always work with an algebraically closed field $K$ of characteristic 0. 

\section{Proof of Theorem \ref{i1}}\label{sec1}
Let us begin this section with a Lemma that will allow us to give a lean proof of the main theorem.

We say that an irreducible curve $\T$ is {\it rational} if its normalization is a smooth rational curve.
\begin{lemma}\label{a3}
Let $Z\subset \mathbb {P}^r$, $r\ge 2$, be a zero-dimensional curvilinear scheme of degree $k$. Then
there is an irreducible and rational curve $\T\subset \mathbb {P}^r$ such that $\deg (\T) \le k-1$
and $Z\subset \T \subseteq \langle Z\rangle$.
\end{lemma}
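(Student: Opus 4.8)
The plan is to realize $Z$ as a subscheme of a single parametrized rational curve via Hermite interpolation carried out inside the span $W:=\langle Z\rangle$. First I would decompose $Z=Z_1\sqcup\cdots\sqcup Z_s$ into its connected components, with $Z_i$ supported at a point $P_i$ and $\deg Z_i=e_i$, so that $\sum_i e_i=k$. By the definition of a curvilinear scheme, each $Z_i$ is the length-$e_i$ jet of a smooth curve germ at $P_i$; I would fix an affine lift $\gamma_i\colon(\mathbb{C},0)\to\mathbb{C}^{N+1}$ of such a germ with $\gamma_i(0)$ a nonzero representative $v_i$ of $P_i$, and record the derivative vectors $w_i^{(j)}:=\gamma_i^{(j)}(0)$ for $0\le j\le e_i-1$. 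A standard computation (a linear form vanishes on $Z_i$ iff it kills all the $w_i^{(j)}$) shows that the cone over $\langle Z_i\rangle$ is spanned by $w_i^{(0)},\dots,w_i^{(e_i-1)}$, so the cone $\widehat W$ over $W=\langle Z\rangle$ is the span of all the $w_i^{(j)}$.

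Next I would choose distinct points $t_1,\dots,t_s\in\mathbb{A}^1\subset\mathbb{P}^1$ and solve, coordinate by coordinate, the Hermite interpolation problem: for each $0\le a\le N$ there is a unique polynomial $\Phi_a(t)$ of degree $\le k-1$ with $\Phi_a^{(j)}(t_i)=(w_i^{(j)})_a$ for all $i$ and all $0\le j\le e_i-1$. This system is square, since a degree-$(k-1)$ polynomial has $k$ coefficients while $\sum_i e_i=k$ conditions are imposed, and the associated confluent Vandermonde matrix is invertible, so $\Phi=(\Phi_0,\dots,\Phi_N)$ exists and is unique. Because the Hermite interpolant is a fixed scalar-polynomial-valued linear combination of the prescribed data, every value $\Phi(t)$ is a $\mathbb{C}$-linear combination of the vectors $w_i^{(j)}$; hence $\Phi(\mathbb{A}^1)\subseteq\widehat W$. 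Projectivizing $\Phi$, which is legitimate near each $t_i$ since $\Phi(t_i)=v_i\neq0$, yields a morphism $f\colon\mathbb{P}^1\to\mathbb{P}^N$ whose image is a curve $T\subseteq W=\langle Z\rangle$, and since $\deg f\le k-1$ we obtain $\deg T\le k-1$. As the image of $\mathbb{P}^1$ under a non-constant map (non-constant because $k\ge 2$), $T$ is irreducible and rational.

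It remains to check the scheme containment $Z\subset T$, which is the crux of the argument and precisely where curvilinearity is used. By construction $\Phi\equiv\gamma_i\ (\mathrm{mod}\ (t-t_i)^{e_i})$ near $t_i$, so any homogeneous form vanishing on $T$ pulls back under $\gamma_i$ to a function vanishing to order $\ge e_i$ at $0$; since $Z_i$ is exactly the order-$e_i$ jet of the smooth germ $\gamma_i$, every such form vanishes on $Z_i$. This gives the local inclusion of ideals at $P_i$, that is $Z_i\subset T$, and summing over $i$ yields $Z\subset T\subseteq\langle Z\rangle$ with $\deg T\le k-1$, as required. I expect the main obstacle to be exactly this final step: one must verify that matching the affine jets through a single polynomial parametrization faithfully reproduces the curvilinear structure of each $Z_i$ as a subscheme of $T$ (handling a possible ramification of $f$ at $t_i$, which only increases the contact order and hence does no harm), and that the chosen lifts $w_i^{(j)}$ genuinely span $\widehat W$, so that $T$ remains inside $\langle Z\rangle$ rather than in some larger linear space.
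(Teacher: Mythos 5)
Your proof is correct, but it takes a genuinely different route from the paper. The paper splits into two cases: if $Z$ is in linearly general position in its span $\mathbb{P}^{k-1}$, it invokes Eisenbud--Harris (\emph{Finite projective schemes in linearly general position}, Theorem 1) to get a rational normal curve of degree $k-1$ through $Z$; otherwise it asserts that $Z$ is the isomorphic linear projection of a curvilinear scheme $W$ in linearly general position in $\mathbb{P}^{k-1}$, takes the rational normal curve through $W$, and projects it to get an irreducible rational curve of degree $\le k-1$ through $Z$ inside $\langle Z\rangle$. You instead build the curve directly: curvilinearity lets you encode each connected component $Z_i$ as the $e_i$-jet of a smooth germ, and Hermite interpolation (the confluent Vandermonde system being square and invertible precisely because $\sum e_i = k$) produces a parametrized rational curve of degree $\le k-1$ whose jets at the nodes $t_i$ reproduce those of $Z$; the containment $T\subseteq\langle Z\rangle$ follows since the interpolant is a polynomial combination of the jet vectors, and $Z\subset T$ follows by the ideal-theoretic pullback argument. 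The two constructions secretly produce the same object --- your $\Phi$ is the composition of a rational normal curve parametrization (in the Hermite basis) with the linear map sending the standard jet data to the vectors $w_i^{(j)}$, i.e., a projected rational normal curve --- but your version is self-contained and uniform: it avoids the linearly-general-position dichotomy, proves rather than cites the existence statement, and replaces the paper's unproved assertion about the scheme $W$ and the projection $\ell_V$ with explicit linear algebra. What the paper's route buys is brevity and a pointer to a standard reference; what yours buys is transparency about exactly where curvilinearity is used (only curvilinear schemes are Hermite data). One small point you could make explicit: non-constancy of $f$ for $k\ge 2$ follows because either some $e_i\ge 2$ (and then $w_i^{(0)}, w_i^{(1)}$ are independent, so $\Phi$ is not projectively constant) or there are two distinct support points with $f(t_1)\ne f(t_2)$.
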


\begin{proof} 
If the scheme $Z$ is in linearly general position, namely $\langle Z \rangle \simeq \mathbb{P}^{k-1}$, then there always exists a rational normal curve of degree $k-1$ passing through it (this is a classical fact, see for instance \cite[Theorem 1]{eh}). 
If $Z$ is not in linearly general position, consider $\mathbb{P}(H^0(Z,\mathcal {O}_Z(1)))\simeq \mathbb{P}^{k-1}$. In such a $\mathbb{P}^{k-1}$ there exists a curvilinear scheme $W$ of degree $k$ in linearly general position such that the projection $\ell _V\colon \mathbb {P}^{k-1} \setminus V \to \langle Z\rangle$ from a $( k-\dim (\langle Z\rangle )-2)$-dimensional vector space $V$ induces an isomorphism between $W$ and $Z$. Consider now the degree $k-1$ rational normal curve $\C\subset \mathbb{P}^{k-1}$ passing through $W$, its projection $\ell_V(\C)$ contains $Z$ and it is irreducible and rational since $\C$ is irreducible and rational and, by construction, $\deg(\ell_V(\C))\leq \deg(\C)=k-1$.
\end{proof}
We do not claim that the curve $\T$ is smooth, because we only need that its normalization is $\mathbb {P}^1$.

Let $X\subset \mathbb {P}^r$ be an integral non-degenerate variety. For any $P\in \langle X\rangle$ the \emph{$X$-rank}  $r_X(P)$ is the minimal cardinality of a subset
$S\subset X$ such that $P\in \langle S\rangle$.

We are now ready to prove the main theorem of this paper.
\\
\\
\indent {\emph {Proof of Theorem \ref{i1}:}} Let $X_{n,d}$ be the Veronese image of $\mathbb{P}^n$ into $\mathbb{P}^{{n+d \choose d}-1}$ via $\mathcal{O}(d)$, let $Z\subset X_{n,d}$ be a minimal degree curvilinear scheme such that $P\in \langle Z \rangle$, and let $U\subset \mathbb{P}^n$ be the curvilinear scheme such that $\nu_{d}(U)=Z$.
The minimality of $Z$ gives $P\notin \langle Z'\rangle$ for any $Z'\subsetneq Z$. Say that  $\mathrm{Cr}(P)=\deg(Z)=\deg(U):=k\ge 2$. If $Z$ is reduced, then $\sr ({P}) =k$ and the statement of the theorem in this case is trivial. Hence we may assume that $Z$ is not reduced.
By Lemma \ref{a3}, there exists a rational curve $\T\subset \mathbb {P}^n$
 such that $U\subset \T$ and $c:= \deg (\T)\le k-1$. Set $\Y:= \nu _d(\T)$:
 $$\begin{array}{rcl}\mathbb{P}^n&\stackrel{\nu_d}{\hookrightarrow}&\mathbb{P}^{{n+d\choose d}-1}\\
U \subset \T&\mapsto& Z\subset \Y
\end{array}.$$
 The curve $\Y\subset \mathbb {P}^{N_{n,d}}$ has degree $cd$ and $Z\subset \Y$. Hence
 $P\in \langle \Y\rangle$. Since $\Y\subset X_{n,d}$, we have $\sr ({P})\le r_{\Y}({P})$. Hence it is sufficient to prove
 that $r_\Y({P}) \le d(k-1)+2-k$. Since the function $t\mapsto dt$ is increasing and $c\le k-1$, it is sufficient to prove that
 $r_\Y({P}) \le dc+2-k$. Since $\T$ is a degree $c$ rational curve, there are a rational normal curve $\D\subset \mathbb {P}^c$
such that $\T$ is obtained from $\D$ using the linear projection from a linear subspace $E\subset \mathbb {P}^c$ with $\dim (E) = c-\dim (\langle E\rangle )-1$ and $E\cap \D =\emptyset$. 
We use the embedding $\nu _d$ also for any projective
space. We need to use it for $\mathbb {P}^s$ with $s:= \max \{n,c\}$. Now let $\C:= \nu _d(\D)$. 
 $$\begin{array}{rcl}\mathbb{P}^c&\stackrel{\nu_d}{\hookrightarrow}&\mathbb{P}^{{c+d\choose d}-1}\\
 \D&\mapsto&\C\\
 &&\\
 \downarrow & &\downarrow \ell_M \\
 &&\\
 \mathbb{P}^n&\stackrel{\nu_d}{\hookrightarrow}&\mathbb{P}^{{n+d\choose d}-1}\\
\T&\mapsto& \Y
\end{array}.$$

The curve $\C$ is a degree $cd$ rational normal curve in its linear span $\langle \C\rangle \cong \mathbb {P}^{dc}$. 
Since $\Y$ is embedded
in $\mathbb {P}^{N_{n,d}}$ by the restriction of the degree $d$ forms, $\Y$ is a linear projection of $\C$ from a linear  subspace $M\subset \mathbb {P}^{dc}$ such
that $\C\cap M =\emptyset$ and $\dim (M) = cd -\dim (\langle \Y\rangle )-1$ (we have $M\cap \C =\emptyset$, because $\deg (\Y) =cd$). Call $\ell _M\colon  \mathbb {P}^{dc} \setminus M\to
\langle \Y\rangle$ the linear projection from $M$. Since $\C\cap M=\emptyset$, the morphism $\ell _M$ is surjective. Since $M\cap \C =\emptyset$,
the map $\ell _M|_\C$ is a degree one morphism $\ell : \C \to \Y$. Set $W:= \ell ^{-1}(Z)$ (scheme-theoretic counterimage). Since $\ell$ is proper and surjective,
$\ell (W) =Z$ and hence $\deg (W) = k$. 
 $$\begin{array}{rcl}\mathbb{P}^c&\stackrel{\nu_d}{\hookrightarrow}&\mathbb{P}^{{c+d\choose d}-1}\\
 \D&\mapsto&W\subset\C\\
 &&\\
 \downarrow & &\downarrow \ell_M \\
 &&\\
 \mathbb{P}^n&\stackrel{\nu_d}{\hookrightarrow}&\mathbb{P}^{{n+d\choose d}-1}\\
U\subset\T&\mapsto& Z\subset\Y
\end{array}.$$


Set $\ell ':= \ell _M|(\langle W\rangle \setminus M\cap \langle W\rangle)$ and notice that even though by construction we clearly have that $W\cap M=\emptyset$, we cannot assume that also $M\cap \langle W\rangle=\emptyset$. Since $\ell (W) =Z$ and $\ell _M$ is surjective,
$\ell '$ is surjective. Fix $O\in \langle W\rangle \setminus M\cap \langle W\rangle$ such that $\ell '(O) =P$. Since $P\notin \langle Z'\rangle$ for each $Z'\subseteq Z$
and $W = \ell ^{-1}(Z)$, then $O\notin \langle W'\rangle$ for any $W'\subsetneq W$.

\quad (a) First assume $\deg (W) \le \lfloor(dc+2)/2\rfloor$. This implies that $O$ has border rank $\deg (W)$ and that either $\mathrm{r}_\C(O) =\deg (W)$ or $\mathrm{r}_\C(O) = dc+2-\deg (W)$ (\cite{cs}, \cite[Theorem 4.1]{lt}, \cite[Theorem 23]{bgi}). Take $S\subset \C$ evincing $\mathrm{r}_\C(O)$. Since $P =\ell _M(O)$, we have
$P\in \langle \ell (S)\rangle$. Since $\sharp (\ell (S)) \le \sharp (S) \le cd+2-k$, we get $\mathrm{r}_\Y({P})\le cd+2-k$.

\quad (b) Now assume $\deg (W) > \lfloor(dc+2)/2\rfloor$. A classical result attributed to Sylvester gives the relation between the length of two 0-dimensional subschemes contained in the rational normal curve and such that their spans contain the same point (see e.g. \cite{bgi, cs}). If $P\in \langle A \rangle\cap \langle B \rangle$ with $A,B$ two 0-dimensional schemes on the rational normal curve of degree $d$ then $\deg(A)+\deg(B)=d+2$. Since $P\notin \langle W'\rangle$ for any $W'\subsetneq W$ and
any zero-dimensional subscheme of $\C$ with degree at most $dc+2$ is linearly independent, Sylvester's theorem gives $\mathrm{r}_\C(O) \le \deg (W)$. As in step (a) we get
$\mathrm{r}_\Y({P})\le k < d(k-1)+2-k$.
\qed

\section{Superficial case}\label{sec2}

In this section we show that Question \ref{q2} has an affirmative answer in the case $n=2$ of ternary forms and that the bound in Question \ref{q2} is seldom sharp in this case
(for large $\mathrm{cr}({P})$ the upper bound in Question \ref{q2} is worst than the one true by \cite{bt}).
 
More precisely, we prove the following result.

\begin{proposition}\label{d1}

Let $P$ be a ternary form of degree $d$ with $2 \le \mathrm{cr}({P}) \le d$.
If $\mathrm{cr}(P) \le d$, then $\sr (P) \le \binom{d+2}{2} -\binom{d-\mathrm{cr}({P})+1}{2}-1$.
\end{proposition}

Before giving the proof of Proposition \ref{d1}, we need the following result.

\begin{proposition}\label{d2}

Let $Z\subset \mathbb {P}^2$ be a degree $k\ge 4$ zero-dimensional scheme. 

There is an integral curve $\C\subset \mathbb {P}^2$
such that $\deg (\C) = k-1$ and $Z\subset \C$ if and only if $Z$ is not contained in a line.

\end{proposition}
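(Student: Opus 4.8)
The plan is to prove the two implications separately; the ``only if'' direction is immediate from B\'ezout, while the ``if'' direction, which is the heart of the statement, combines a dimension count with a fixed--component analysis and Bertini's theorem. For the ``only if'' direction, suppose $C$ is integral with $\deg(C)=k-1$ and $Z\subset C$, and assume for contradiction that $Z$ lies on a line $\ell$. Since $k-1\ge 3$, the line $\ell$ cannot be a component of the integral curve $C$, so B\'ezout gives $\deg(C\cap\ell)=k-1$; but $Z\subset C\cap\ell$ then forces $k=\deg(Z)\le k-1$, a contradiction. Hence $Z$ is not contained in a line.

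For the ``if'' direction, assume $Z$ is not contained in a line. First I would record the dimension estimate
\[
\dim|\mathcal{I}_Z(k-1)|\ \ge\ \binom{k+1}{2}-1-k\ =\ \frac{(k-2)(k+1)}{2},
\]
which for $k\ge 4$ is strictly larger than $k-1$. The goal is to show that the general member of the linear system $|\mathcal{I}_Z(k-1)|$ of plane curves of degree $k-1$ through $Z$ is integral; since every such member contains $Z$, this yields the desired curve $C$. The strategy is to prove that $|\mathcal{I}_Z(k-1)|$ has no fixed component and then invoke Bertini, splitting the fixed--component analysis according to the degree $f$ of a putative fixed curve $F$. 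If $f\ge 2$, every section of $\mathcal{I}_Z(k-1)$ is divisible by the equation of $F$, so $|\mathcal{I}_Z(k-1)|$ is a subsystem of $|\mathcal{O}_{\mathbb{P}^2}(k-1-f)|$ and has dimension at most $\binom{k+1-f}{2}-1\le\binom{k-1}{2}-1=\frac{k^2-3k}{2}$; for $k\ge 2$ this is strictly smaller than $\frac{(k-2)(k+1)}{2}$, contradicting the estimate above. Thus no fixed component of degree $\ge 2$ occurs, and the only case left is a fixed line.

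The fixed--line case is the main obstacle, precisely because here the crude dimension count is inconclusive (for $f=1$ the two bounds coincide); this is where the hypothesis that $Z$ is not contained in a line becomes indispensable. Given any line $\ell$, set $Z':=\mathrm{Res}_\ell(Z)$, so $\deg(Z')=k-\deg(Z\cap\ell)$; since $Z\not\subset\ell$ we have $\deg(Z\cap\ell)\le k-1$, whence $\deg(Z')\le k-1$. From the residual exact sequence
\[
0\to\mathcal{I}_{Z'}(k-2)\xrightarrow{\;\cdot\ell\;}\mathcal{I}_Z(k-1)\to\mathcal{I}_{Z\cap\ell,\,\ell}(k-1)\to 0,
\]
a member of $|\mathcal{I}_Z(k-1)|$ avoids $\ell$ as soon as the restriction map to $H^0(\mathcal{I}_{Z\cap\ell,\ell}(k-1))$ is nonzero. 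Now $\mathcal{I}_{Z\cap\ell,\ell}(k-1)\cong\mathcal{O}_{\mathbb{P}^1}(k-1-\deg(Z\cap\ell))$ has nonzero sections because $\deg(Z\cap\ell)\le k-1$, and the restriction map is onto once $H^1(\mathcal{I}_{Z'}(k-2))=0$. The latter vanishing follows from the standard cohomological bound for zero--dimensional schemes in the plane (see \cite{bgi}): a scheme of degree $\le 2(k-2)+1$ imposes independent conditions on curves of degree $k-2$ unless some line carries a subscheme of it of degree $\ge k$, and here $\deg(Z')\le k-1<k$ excludes that. Hence no line is fixed.

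Combining the two cases, $|\mathcal{I}_Z(k-1)|$ has no fixed component, and since its dimension exceeds $k-1$ it cannot be composed with a pencil (a system composed with a pencil of curves of degree $e\ge1$ embeds in the $\tfrac{k-1}{e}$--th symmetric power of a $\mathbb{P}^1$, so has dimension at most $k-1$). Therefore the associated rational map is generically finite with $2$--dimensional image, and, working over $\mathbb{C}$, Bertini's theorem guarantees that the general member is irreducible and reduced, i.e. integral, of degree $k-1$ and containing $Z$. This completes the plan; the delicate point throughout is the exclusion of a fixed line, for which the non--degeneracy hypothesis on $Z$ and the $h^1$--vanishing lemma are both essential.
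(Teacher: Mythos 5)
Your proposal is correct, and it reaches the conclusion by a genuinely different route than the paper, even though both arguments end with an application of Bertini's second theorem to the linear system $\vert \mathcal {I}_Z(k-1)\vert$ and both rest on the same cohomological lemma (a planar zero-dimensional scheme of degree $\le 2t+1$ fails to impose independent conditions on degree-$t$ curves only if some line carries a subscheme of it of degree $\ge t+2$; \cite[Lemma 34]{bgi}). The paper verifies the hypotheses of Bertini infinitesimally: its Claim 1 shows that $\vert \mathcal {I}_Z(k-1)\vert$ has no base points outside $Z_{red}$ by applying the lemma to $Z\cup \{P\}$, and its Claim 2 shows that the induced morphism has $2$-dimensional image by a tangent-vector computation, applying the lemma to $Z\cup {\bf {v}}$; irreducibility of the general member then comes from Jouanolou's form of Bertini. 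You instead rule out fixed components directly: a fixed divisor of degree $\ge 2$ is excluded by comparing $\dim \vert \mathcal {I}_Z(k-1)\vert \ge \frac{(k-2)(k+1)}{2}$ with $\binom{k-1}{2}-1$, a fixed line is excluded via the residual exact sequence together with $h^1(\mathcal {I}_{Z'}(k-2))=0$ (the lemma applied to the residual scheme, which has degree $\le k-1 < k$ precisely because $Z$ is not contained in a line), and ``not composed with a pencil'' follows from another dimension count; then the classical form of Bertini's second theorem applies. Your route trades the paper's differential-rank argument for elementary dimension counts plus the residual sequence; the paper's route yields the slightly stronger information that the system is base-point-free away from $Z_{red}$. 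A further point in your favor: you prove the ``only if'' direction cleanly by B\'ezout (an integral curve of degree $k-1\ge 3$ meets a line in a scheme of degree exactly $k-1<k$), whereas the paper's opening remark for the case $Z$ contained in a line addresses a different fact (existence of a smooth degree-$k$ curve with $C\cap D=Z$) and never spells out this implication.
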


\begin{proof}
First assume that $Z$ is contained in a line $\D$. B\'ezout theorem gives that $\D$ is the only integral curve of degree $<k$ containing $Z$.

Now assume that $Z$ is not contained in a line. 

\quad {\emph {Claim 1.}} The linear system $\vert \mathcal {I}_Z(k-1)\vert$ has no base points
outside $Z_{\red}$.

\quad {\emph {Proof of Claim 1.}} Fix $P\in \mathbb {P}^2\setminus Z_{\red}$. Since
$\deg (Z\cup \{P\}) = k+1$, we have $h^1(\mathcal {I}_{Z\cup \{P\}}(k-1)) >0$ if
and only if there is a line $\D$ containing $Z\cup \{P\}$, but, since in our case $Z$ is not contained in a line, we
get $h^1(\mathcal {I}_{Z\cup \{P\}}(k-1))=0$. Hence $h^0(\mathcal {I}_{Z\cup \{P\}}(k-1))
= h^0(\mathcal {I}_Z(k-1))-1$, i.e. $P$ is not a base point of $\vert \mathcal {I}_Z(k-1)\vert$.

By Claim 1, the linear system $\vert \mathcal {I}_Z(k-1)\vert$ induces a morphism $\psi \colon \mathbb {P}^2\setminus Z_{\red} \to \mathbb {P}^x$.

\quad {\emph {Claim 2.}} We have $\dim (Im (\psi) ) =2$.

\quad {\emph {Proof of Claim 2.}} It is sufficient to prove that the differential $d\psi (Q)$
of $\psi$ has rank $2$ for a general $Q\in \mathbb {P}^2$. Assume that $d\psi (Q)$ has rank $\le 1$, i.e.
assume the existence of a tangent vector ${\bf {v}}$ at $Q$ in the kernel of the linear
map $d\psi (Q)$. Since $h^1(\mathcal {I}_{Z\cup \{P\}}(k-1)) =0$ (see proof of Claim 1), this
is equivalent to $h^1(\mathcal {I}_{Z\cup {\bf {v}}}(k-1)) >0$. Since $\deg (Z\cup {\bf {v}})=k+2
\le 2(k-1)+1$, there is a line $\D\subset \mathbb {P}^2$ such that $\deg (\D\cap (Z\cup {\bf {v}})) \ge k+1$ (\cite[Lemma 34]{bgi}). Hence $\deg (Z\cap \D) \ge k-1$. Since $k\ge 4$ there are at most finitely many lines $\D_1,\dots ,\D_s$ such that $\deg (\D_i\cap Z)\ge k-1$ for all $i$. If $Q\notin \D_1\cup \cdots \cup \D_s$, then $\deg (\D\cap (Z\cup {\bf {v}}))\le k$ for every line $\D$.

By Claim 2 and Bertini's second theorem (\cite[Part 4 of Theorem 6.3]{j}) a general
$\C\in \vert \mathcal {I}_Z(k-1)\vert$ is irreducible.
\end{proof}

Any degree $2$ zero-dimensional scheme $Z\subset \mathbb {P}^n$, $n\ge 2$ is contained in a unique line and hence it is contained in a unique irreducible curve of degree $2-1$. Now we check that in case our form has curvilinear rank equal to $3$, then Proposition \ref{d2} fails in a unique case.

\begin{remark}\label{d3}
Let $Z\subset \mathbb {P}^2$ be a zero-dimensional scheme such that
$\deg (Z)=3$. Since $h^1(\mathcal {I}_Z(2))=0$ (\cite{bgi}, Lemma 34), we
have $h^0(\mathcal {I}_Z(2))=3$. A dimensional count gives that
$Z$ is not contained in a smooth conic if and only if there is $P\in \mathbb {P}^2$ with
$Z =2P$ (in this case $\vert \mathcal {I}_Z(2)\vert$ is formed by the unions $\R\cup \LL$ with
$\R$ and $\LL$ lines through $P$).
\end{remark}

We conclude our paper with the Proof of Proposition \ref{d1}.

\qquad {\emph {Proof of Proposition \ref{d1}.} Take $Z\subset \mathbb {P}^2$ evincing the cactus rank. If $Z$ is contained in a line $\LL$, then $P\in \langle \nu _d(\LL)\rangle$
and hence $\sr ({P})\le d$ by a theorem of Sylvester already recalled in item (b) of the proof of our mail theorem (\cite{bgi, cs}) or by \cite{lt}, Proposition 5.1.
Now assume that $Z$ is not contained in a line. Let $\C\subset \mathbb {P}^2$ be an integral curve of degree $\mathrm{cr}({P})-1$
containing $Z$. We have $P\in \langle \nu _d({\C})\rangle$ and $\dim (\langle \nu _d({\C}))\rangle = \binom{d+2}{2} -\binom{d+1-k}{2} -1$.
Apply \cite{lt}, Proposition 5.1.
\qed

\providecommand{\bysame}{\leavevmode\hbox to3em{\hrulefill}\thinspace}

\end{document}